\title{Non-noetherian $\GL$-algebras in characteristic two}
\author{Karthik Ganapathy}
\address{Department of Mathematics, University of California, San Diego, CA}
\email{\href{mailto:karthg@umich.edu}{karthg@umich.edu}}
\thanks{}
\urladdr{\url{https://public.websites.umich.edu/~karthg/}}
\theoremstyle{plain}
\begin{document}
\begin{abstract}
Over fields of characteristic two, we construct an infinite ascending chain of $\GL$-stable ideals in the coordinate ring of infinite skew-symmetric matrices. This provides the first known example of a non-noetherian $\GL$-algebra resolving a long-standing open question in the area. Our results build on the work of Draisma--Krasilnikov--Krone.
\end{abstract}
\maketitle
\section{Introduction}\label{s:intro}
In his study of the Segre embedding, Snowden \cite{sno13delta} proved a noetherian property for coordinate independent ideals in infinite-variable polynomial rings. Similar results can be found in, or deduced from, many independent streams of work spanning multiple decades \cite{co67laws,adf80wedge2,ah07sym,dra10fin,cef15fi}. Snowden recognized these disparate results as part of a larger theory---that of $\GL$-algebras, which are commutative $k$-algebras with a \textit{polynomial} action of the infinite general linear group $\GL \coloneqq \GL(k^{\infty})$. 
Since then, specific $\GL$-algebras have been comprehensively analyzed \cite{ss16gl,ss19gl2, gan22ext, ganglp} with the noetherian property being established only in specific instances \cite{ss17grobner, nss16deg2, nss19skew}. 
A key breakthrough came with Draisma's proof \cite{dra19top} that all finitely generated $\GL$-algebras are \textit{topologically $\GL$-noetherian}, meaning the descending chain condition holds for \textit{$\GL$-stable closed subsets} of their spectrum. Draisma's theorem can be used to prove and even generalize Stillman's conjecture \cite{ess19big, dll19still, ess19gen} (see also \cite{ess19cub}). This rich history suggests an affirmative answer to the algebraic noetherianity problem, often described as the ``most tantalizing problem in this area" \cite[Page 709]{dra19top}:
\begin{question}
\label{ques:main}
    Let $P$ be a finite length polynomial representation of $\GL$. Does every ascending chain of $\GL$-stable ideals $I_1 \subset I_2 \subset \ldots \subset I_n \subset \ldots$ in $k[P]$ eventually stabilize?
\end{question}
The goal of this paper is to negatively answer this question when $\chark(k) = 2$ which we assume moving forward. Let $S = \Sym(\lw^2(k^\infty))$ and $R = \lw(\lw^2(k^\infty))$; we identify $S$ with the polynomial algebra $k[x_{i, j} \vert i,j \in \bN;  i<j]$ and since $\chark(k)=2$, we identify $R$ with the quotient of $S$ by the $\GL$-stable ideal $\fm^{[2]} = (x_{i, j}^{2})$. 

For each $n \geq 2$, let $I_n$ be the (ordinary) ideal in $R$ generated by the Pl\"ucker elements and monomials of ``length $\leq n$ cycles"; see \S\ref{s:preliminaries} for exact definitions. Our key insight is:
\begin{theorem}\label{thm:glstable}
The ideal $I_n$ is stable under the action of $\GL$ for all $n \geq 2$. 
\end{theorem}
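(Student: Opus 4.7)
The plan is to reduce $\GL$-stability of $I_n$ to checking stability under the elementary unipotents $u_{\alpha\beta}(t) = I + tE_{\alpha\beta}$ for $\alpha \neq \beta$ and $t \in k$, since these generate $\GL$. Because the action on any monomial is polynomial in $t$, it suffices to show that every coefficient of $t^j$ in $u_{\alpha\beta}(t)\cdot f$ lies in $I_n$ for each generator $f$ of $I_n$. The Plücker elements span a classical $\GL$-submodule, so the remaining work concerns cycle monomials.

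Fix a cycle monomial $c = x_{i_1 i_2} x_{i_2 i_3} \cdots x_{i_k i_1}$ of length $k \leq n$. Using the symmetry $x_{ab} = x_{ba}$ in characteristic two, $u_{\alpha\beta}(t)$ sends each factor $x_{ab}$ to $x_{ab} + t(\delta_{\beta,a} x_{\alpha b} + \delta_{\beta,b} x_{a \alpha})$. If $\beta$ is not a vertex of $c$, the action is trivial. Otherwise $\beta = i_a$ for a unique $a$, appearing in exactly the two factors $x_{i_{a-1} \beta}$ and $x_{\beta i_{a+1}}$, so $u_{\alpha\beta}(t) c$ has degree at most two in $t$. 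I verify that the $t^1$- and $t^2$-coefficients lie in $I_n$.

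The $t^1$-coefficient equals $(x_{i_{a-1}\alpha} x_{\beta i_{a+1}} + x_{i_{a-1}\beta} x_{\alpha i_{a+1}})\cdot m$, where $m = \prod_{\ell \notin \{a-1,a\}} x_{i_\ell i_{\ell+1}}$. The characteristic-two Plücker relation on $\{i_{a-1},\alpha,\beta,i_{a+1}\}$ rewrites the parenthesised sum as $x_{\alpha\beta}\cdot x_{i_{a-1} i_{a+1}}$ modulo the Plücker ideal. Substituting gives $x_{\alpha\beta}$ times the length-$(k-1)$ cycle obtained from $c$ by contracting the vertex $\beta$; this cycle is a generator of $I_n$. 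Degenerate cases in which $\alpha$ coincides with $i_{a-1}$ or $i_{a+1}$ are handled using $x_{ii}=0$, producing a length-$(k-1)$ cycle times an extra variable.

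The $t^2$-coefficient equals $m \cdot x_{i_{a-1}\alpha} x_{\alpha i_{a+1}}$, the monomial obtained by relabelling $\beta \mapsto \alpha$ throughout $c$. If $\alpha$ is not already a vertex of $c$, this is a length-$k$ cycle, hence a generator. Otherwise $\alpha = i_b$ for some $b \neq a$, and the monomial factors as a product of two cycles joined at $\alpha$, each of length strictly less than $k$ and thus a generator of $I_n$. The main obstacle is the $t^1$-step, where the three-term form of the characteristic-two Plücker relation is essential for collapsing the two Leibniz terms into $x_{\alpha\beta}$ times a shorter cycle; handling the degenerate overlap cases (when $\alpha$ already appears in $c$) requires some care but follows the same template.
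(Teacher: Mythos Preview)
Your argument is correct and follows essentially the same route as the paper's: reduce to the action of elementary matrices on cycle monomials and then use the Pl\"ucker relation $\pl_{i_{a-1},\alpha,\beta,i_{a+1}}$ to collapse the two cross-terms into $x_{\alpha\beta}$ times the $(k-1)$-cycle obtained by contracting~$\beta$. One small correction: the transvections $u_{\alpha\beta}(t)$ generate only $\SL$, not $\GL$, so you should also note (as the paper does) that diagonal and permutation matrices preserve $I_n$ trivially.
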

Combining this with Draisma--Krasilnikov--Krone's theorem \cite{dkkinc} that every inclusion in the chain $I_2 \subset I_3 \subset \ldots \subset I_n \subset \ldots $ is strict (Theorem~\ref{thm:dkk}), we immediately obtain:
\begin{theorem}\label{thm:main}
When $P = \lw^2(k^\infty)$ and $\chark(k) = 2$, the answer to Question~\ref{ques:main} is no.
\end{theorem}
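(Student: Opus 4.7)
The plan is to lift the ascending chain from $R$ to the ambient polynomial ring $k[P] = S = \Sym(\lw^2(k^\infty))$. Since $\chark(k) = 2$, the paper already identifies $R$ with the quotient $S/\fm^{[2]}$, where $\fm^{[2]} = (x_{i,j}^2)$ is a $\GL$-stable ideal. Let $\pi \colon S \to R$ denote the quotient map, which is both surjective and $\GL$-equivariant.

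First I would define $J_n \coloneqq \pi^{-1}(I_n) \subset S$ for each $n \geq 2$. Because $\pi$ is $\GL$-equivariant and $I_n$ is $\GL$-stable by Theorem~\ref{thm:glstable}, each $J_n$ is a $\GL$-stable ideal of $S$, and the chain $J_2 \subset J_3 \subset \cdots$ is ascending because the $I_n$ are.

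Next I would verify strictness: since $\pi$ is surjective, $J_n = J_{n+1}$ would imply $I_n = \pi(J_n) = \pi(J_{n+1}) = I_{n+1}$, contradicting Theorem~\ref{thm:dkk}. Hence $J_2 \subsetneq J_3 \subsetneq \cdots$ is a non-stabilizing ascending chain of $\GL$-stable ideals in $k[P]$, answering Question~\ref{ques:main} in the negative for $P = \lw^2(k^\infty)$.

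Given Theorems~\ref{thm:glstable} and~\ref{thm:dkk}, I do not anticipate any real obstacle: the deduction is essentially formal, and all the substantive work (the combinatorial definition of the $I_n$ and the verification of their $\GL$-stability) has been absorbed into the two cited theorems. I expect the actual proof in the paper to be only a few lines.
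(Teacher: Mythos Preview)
Your proposal is correct and follows essentially the same approach as the paper: the paper first observes that the chain $I_2 \subset I_3 \subset \cdots$ in $R$ is $\GL$-stable (Theorem~\ref{thm:glstable}) and strictly ascending (Theorem~\ref{thm:dkk}), and then lifts to $S$ via the surjection $S \twoheadrightarrow R$, which is exactly your preimage construction $J_n = \pi^{-1}(I_n)$ spelled out in more detail.
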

We emphasize that in char zero, even $\GL$-equivariant $S$-modules and $R$-modules satisfy the noetherian property \cite{nss16deg2,nss18reg}. Topological $\GL$-noetherianity for $\Spec(S)$ follows by classical linear algebra in all characteristics. Theorem~\ref{thm:main} is all the more surprising as the algebra $R/I_2$ is even $\fS_{\infty}$-noetherian in char zero \cite[Proposition~4.9]{dra10fin}. 

Theorem~\ref{thm:main} implies the non-noetherianity of $k[P]$ whenever $P$ contains a subquotient of the form $\lw^2(k^\infty)$ (and more generally, $\lw^2(k^\infty) \otimes Q$). This includes the cases $P = \Sym^2(k^{\infty})$, $\Div^2(k^{\infty})$ and $(k^{\infty})^{\otimes 2}$. 
Our counterexample does \textit{not} establish the non-noetherianity of the the corresponding twisted commutative algebra ``FIM" \cite[Remark~1.3]{nss16deg2} (see also \cite{mw19higher}) in char two as the generators of $I_{n+1}/I_n$ are ``Frobenius twisted" $\GL$-representations which vanish on the tca side (see \cite[\S2]{gan22ext}).

We conclude by further illustrating the subtlety of Theorem~\ref{thm:main}. Recall that a $\GL$-stable ideal $I$ is \textit{$\GL$-prime} if given $\GL$-stable ideals $\fa$ and $\fb$ with $\fa \fb \subset I$, either $\fa \subset I$ or $\fb \subset I$. The $\GL$-spectrum $\Spec_{\GL}$ is the set of $\GL$-prime ideals endowed with the Zariski topology. 
\begin{theorem}\label{thm:glspec}
For all $n \geq 2$, the ideals $I_n$ define the same closed subset of $\Spec_{\GL}(R)$.
\end{theorem}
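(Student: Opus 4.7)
Since $I_2 \subset I_n$ for each $n \geq 2$, the inclusion $V(I_n) \subset V(I_2)$ in $\Spec_{\GL}(R)$ is automatic, so the content is the reverse containment: every $\GL$-prime $\fp \in \Spec_{\GL}(R)$ containing $I_2$ must also contain $I_n$. I would argue by induction on $n$, with the base case $n=2$ tautological. For the inductive step, suppose $I_n \subset \fp$; since $I_{n+1}$ is the $\GL$-stable ideal generated over $I_n$ by length-$(n{+}1)$ cycle monomials, it suffices to show that each such cycle $c$ lies in $\fp$.

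The strategy is to exhibit, for each length-$(n{+}1)$ cycle $c$, a $\GL$-stable ideal $\fb$ with $c \cdot \fb \subset I_n$ and $\fb \not\subset \fp$. The $\GL$-primality of $\fp$ then forces $\langle c \rangle_{\GL} \subset \fp$ and hence $c \in \fp$. To produce the identity $c \cdot \fb \subset I_n$, one can combine the Pl\"ucker congruence $x_{ij}x_{kl}+x_{ik}x_{jl}+x_{il}x_{jk} \equiv 0 \pmod{I_2}$ with the exterior identity $x_{ij}^2 = 0$ in $R$: multiplying a Pl\"ucker relation by one of its constituent variables and erasing the resulting square yields the ``path-rerouting'' congruence $x_{ij}x_{ik}x_{jl} \equiv x_{ij}x_{il}x_{jk} \pmod{I_2}$. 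Iterating such rerouting moves inside $c$, each at the cost of introducing one extra edge variable from outside the cycle, one should be able to split the length-$(n{+}1)$ cycle into a combination of strictly shorter cycles; the inductive hypothesis together with careful bookkeeping then places $c \cdot (\text{product of extra edges})$ into $I_n$.

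The main obstacle will be arranging that the $\GL$-stable ideal $\fb$ generated by these extra edge products is not contained in $\fp$. Here I would appeal to the observation recorded immediately after Theorem~\ref{thm:main}: the generators of $I_{n+1}/I_n$ arise as Frobenius-twisted $\GL$-representations in the sense of \cite{gan22ext}, while the untwisted companions that show up during the rerouting form $\GL$-subrepresentations of $R/I_n$ that persist in every $\GL$-prime quotient. Turning this dichotomy into a rigorous statement---that Frobenius twists are $\GL$-nilpotent modulo $I_n$ while their untwisted parents are not---is the key technical step and the part of the argument I expect to be the most delicate, since it effectively requires a representation-theoretic identification of the primary components of $V(I_2)$ inside $\Spec_{\GL}(R)$.
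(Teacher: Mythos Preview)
Your framework is sound—showing $I_{n+1} \subset \fp$ for every $\GL$-prime $\fp \supset I_n$ is exactly what is needed—but the execution has a real gap, and the paper closes it by a more direct route that sidesteps your ``main obstacle'' entirely.

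The paper proves the single ideal containment $\fm I_{n+1} \subset I_n$ (Proposition~\ref{prop:mIn+1}). Since $I_{n+1} \subset \fm$, this gives $I_{n+1}^2 \subset I_n$, and then for any $\GL$-prime $\fp \supset I_n$ the definition of $\GL$-prime applied with $\fa = \fb = I_{n+1}$ yields $I_{n+1} \subset \fp$ at once. There is no need to manufacture an auxiliary $\fb$ with $\fb \not\subset \fp$, and no representation-theoretic analysis of primes is required; the Frobenius-twist dichotomy you sketch in your last paragraph is unnecessary.

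Your rerouting identity $x_{ij}x_{ik}x_{jl} \equiv x_{ij}x_{il}x_{jk} \pmod{I_2}$ is correct and even shows $w_{n+1}\cdot x_{1,n+2} \in I_n$ (one move turns the $(n{+}1)$-cycle with a pendant edge into an $n$-cycle with a length-two tail). But to get $\langle w_{n+1}\rangle_{\GL}\cdot \fb \subset I_n$ for a $\GL$-stable $\fb$, you must in particular handle $w_{n+1}\cdot x_{n+2,n+3}$, the $(n{+}1)$-cycle with a \emph{disjoint} edge. Here Pl\"ucker rerouting is powerless: every instance of the move requires two adjacent vertices of degree $\geq 2$, and on this graph the only such pairs are consecutive cycle vertices, where the move merely permutes the cyclic order and can neither link the cycle to the disjoint edge nor create a shorter cycle. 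The paper supplies the missing ingredient: it uses the $\GL$-action itself (a Lie-algebra element applied at a trivalent vertex, Lemma~\ref{lem:trivalent}) to pass, modulo $I_n$, from an $n$-cycle carrying a length-two tail—manifestly in $I_n$—to the $(n{+}1)$-cycle with a disjoint edge. That step, not available from Pl\"ucker relations alone, is what your proposal is missing.
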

In char $0$, Snowden \cite{sno20spe} proved that the $\GL$-spectrum is noetherian for a finitely generated $\GL$-algebra. The above result suggests that Snowden's result may still hold in char $p$.

\S\ref{s:proofs} contains all proofs of our results. Our proofs use fairly elementary methods. In some sense, our key insight was to work with the exterior algebra where the relative ease of explicit computations led us to guess and subsequently prove Theorems~\ref{thm:dkk} and \ref{thm:glstable} (see also \cite[\S1.2.3]{gan22ext}).

\subsection*{Acknowledgements} I am indebted to Andrew Snowden for carefully reading a previous draft and providing numerous comments which greatly improved the exposition. I also thank Jan Draisma for sharing \cite{dkkinc} with me.

\section{Recollections}\label{s:preliminaries}
We endow the algebras $S$ and $R$ with an $\bN^{\infty}$-grading by setting $\deg(x_{i, j}) = \epsilon_i + \epsilon_j$ where $\epsilon_r$ is $1$ in the $r$-th coordinate and $0$ elsewhere. If we write $x_{i,j}$ with $i > j$, we are abusing notation and mean $x_{j,i}$. 

There is a useful way to depict elements of $R$ which we now recall. Each monomial $m$ in $R$ corresponds to an undirected simple graph $G_m$ with vertex set $\bN$ and edge set $E(G_m) = \{ \{i,j\} \colon  x_{i,j} \text{ divides } m \}$. For two monomials $m$ and $m'$, we have
\[
E(G_{m m'})= 
\begin{cases}
    E(G_m) \cup E(G_{m'}) & \text{if }E(G_m) \cap E(G_{m'}) = \emptyset \\
    \emptyset & \text{if }E(G_m) \cap E(G_{m'}) \ne \emptyset. \\
\end{cases}
\]
We often omit isolated vertices in our graphs.
\begin{defn}
Let $I_2$ be the ideal of $R$ generated by the  Pl\"ucker elements 
\[\pl_{i_1,i_2,i_3,i_4} = x_{i_1,i_2}x_{i_3,i_4}+ x_{i_1,i_3}x_{i_2,i_4}+x_{i_1,i_4}x_{i_2,i_3}.\] For $n > 2$, let $I_n$ be the ideal generated by the Pl\"ucker elements and monomials $m$ where $G_m$ is an $i$-cycle with $2 < i \leq n$.
\end{defn}

Each ideal $I_n$ is homogeneous with respect to the $\bN^{\infty}$-grading on $R$. For $n>2$, let $w_n = x_{1,2}x_{2,3} \cdots x_{n-1,n}x_{n,1}$ be the ``standard $n$-cycle" and $\pl = x_{1,2}x_{3,4}+x_{1,3}x_{2,4}+x_{1,4}x_{2,3}$ be the ``standard Pl\"ucker element"; all $n$-cycles and Pl\"ucker elements are a permutation of $w_n$ and $\pl$ respectively.

The next result is essentially a result of Draisma--Krasilnikov--Krone \cite{dkkinc}; the proof here is a streamlined version of their proof.
\begin{theorem}[cf.~Theorem~43 in \cite{dkkinc}]\label{thm:dkk}
The monomial $w_{n+1}$ is not in $I_n$ for all $n \geq 2$.
\end{theorem}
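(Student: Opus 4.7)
I would construct a $k$-linear functional $W : R \to k$ that separates $w_{n+1}$ from $I_n$. On monomials, set $W(m) = 1$ if $G_m$ is a single $(n+1)$-cycle on the vertex set $\{1, \ldots, n+1\}$, and $W(m) = 0$ otherwise; extend $k$-linearly. Then $W(w_{n+1}) = 1$ by definition, so it suffices to show $W$ vanishes on $I_n$. Since $I_n$ is $\bN^{\infty}$-graded and $W$ vanishes outside multidegree $2(\epsilon_1 + \cdots + \epsilon_{n+1})$, this reduces to checking $W(r \cdot g) = 0$ for each monomial $r$ and homogeneous generator $g$ of $I_n$ with $r g$ of this multidegree.

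If $g$ is an $\ell$-cycle monomial with $3 \leq \ell \leq n$, then $G_{rg}$ (if nonzero) is a $2$-regular graph on $\{1, \ldots, n+1\}$ containing $G_g$ as a connected component of size $\ell < n+1$. Hence $G_{rg}$ has at least two components and is not a single $(n+1)$-cycle, so $W(r g) = 0$.

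If $g = \pl_{a,b,c,d}$, multidegree constraints force $\{a, b, c, d\} \subseteq \{1, \ldots, n+1\}$ and $G_r$ to have degree $1$ at each of $a, b, c, d$ and degree $2$ at each other vertex of $\{1, \ldots, n+1\}$. Since the maximum degree in $G_r$ is $2$, $G_r$ is a disjoint union of cycles and exactly two paths $P_1, P_2$ whose four endpoints are $\{a, b, c, d\}$. If $G_r$ contains any cycle, every monomial of $r \cdot g$ retains it as a component and cannot be a single $(n+1)$-cycle, so $W(r g) = 0$. Otherwise $G_r = P_1 \sqcup P_2$ pairs $\{a, b, c, d\}$ by endpoints into a perfect matching $\pi_*$; the three monomials of $\pl_{a,b,c,d}$ correspond to the three perfect matchings on $\{a, b, c, d\}$, of which the one agreeing with $\pi_*$ closes each path into its own cycle (two components, not an $(n+1)$-cycle), while the other two join both paths end-to-end into a single cycle of length $\ell_1 + \ell_2 + 2 = n+1$ on the full vertex set $\{1, \ldots, n+1\}$. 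Both of these ``joining'' terms are nonzero since the only $\{a, b, c, d\}$-internal edges that could possibly lie in $G_r$ belong to $\pi_*$. In characteristic two, $W(r \cdot g) = 1 + 1 = 0$.

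The main obstacle is the Pl\"ucker case: isolating the path-and-cycle decomposition of $G_r$, verifying the ``1 vs.\ 2 cycles'' dichotomy among the three Pl\"ucker terms, and ensuring that both joining terms are nonzero. The characteristic-$2$ hypothesis then converts the count of two $(n+1)$-cycles into the required parity cancellation, and the argument collapses into a clean weight-functional proof.
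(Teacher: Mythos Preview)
Your proposal is correct and is essentially the same argument as the paper's, phrased dually: the paper decomposes the degree-$\bd$ piece as $\cL_1 \oplus \cL_2 \oplus \cL_3$ (monomials with a short cycle; zero-sum combinations of $(n{+}1)$-cycles; $k\{w_{n+1}\}$) and shows $(I_n)_\bd \subset \cL_1 \oplus \cL_2$, whereas your functional $W$ is precisely the projection onto $\cL_3$ along $\cL_1 \oplus \cL_2$. The combinatorial core---that for a Pl\"ucker generator the multiplying monomial is two paths on $\{a,b,c,d\}$, one matching closes them into two cycles, and the other two each yield a single $(n{+}1)$-cycle so their contributions cancel as $1+1=0$---is identical in both.
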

\begin{proof}
   Let $\bd$ denote the multidegree $(2,2,2, \ldots, 2, 0, 0, \ldots )$ in total degree $2(n+1)$. The monomials in degree $\bd$ correspond to graphs with $n+1$ edges on $\{1, 2, \ldots, 2n+1, 2n+2\}$ whose connected components are a union of cycles. We decompose the $k$-vector space $R_{\bd}$ as
 \[R_{\bd} \cong \cL_{1} \oplus \cL_{2} \oplus \cL_{3}  \]
 where $\cL_{1}$ is spanned by monomials $m$ with $G_m$ containing an $i$-cycle with $3 \leq i< n+1$, $\cL_{2}$ is the subspace \{$\sum_{\sigma \in \fS_{n+1}} a_{\sigma} \sigma(w_{n+1}) \vert \sum_{\sigma} a_{\sigma} = 0$\}, and $\cL_{3} = k\{w_{n+1}\}$. 
 
 For the sake of contradiction, assume $w_{n+1}\in I_n$. Then we may write $w_{n+1} = a + b$ with $a \in \cL_{1}$ and $b = \sum_{i=1}^r c_i m_i \sigma_i(\pl) \in R_{\bd}$ with $c_i \in k^{\times}$, $m_i$ a monomial, $\sigma_i \in \fS_{n+1}$ and $m_i \sigma_i(\pl) \in R_{\bd}$ for all $0 < i \leq r$. We may further assume that each $m_i \notin I_{n}$ as we can subsume such terms into $a$. Since $\pl$ has degree $(1, 1, 1, 1)$, we get that $n_i \coloneqq \sigma_i^{-1}(m_i)$ has multidegree $(1,1,1,1,2,2, \ldots, 2)$. The graph $G_{n_i}$ does not contain any cycles (as $m_i \notin \cL_1$) so 
 $G_{n_i}$ will be a disjoint union of two paths with endpoints $\{1,2,3,4\}$. Expanding $\pl$, we get $n_i \pl = n_i x_{1,2}x_{3,4} + n_i x_{1,3}x_{2,4}+ n_i x_{1,4}x_{2,3}$ which is represented graphically below.
 Without loss of generality {(since $\chark(k)=2$)}, we may assume the path in $G_{n_i}$ that starts at $1$ ends at $2$ (and so the other path has endpoints $3$ and $4$). Therefore, the first term in the expansion lies in $\cL_{1}$ and the sum of the other two terms lies in $\cL_{2}$ as $1+1=0$ in $k$.
\[ n_i\pl = 
\begin{tikzcd}[cramped, row sep=tiny, column sep=small]
	2 & 4 & &{}& & 2 & 4 & &{} & & 2 & 4 \\
	1 & 3 & &{}& & 1 & 3 & &{} & & 1 & 3
	\arrow[no head, from=1-11, to=1-12]
	\arrow[curve={height=-30pt}, dotted, no head, from=2-1, to=1-1]
	\arrow[no head, from=2-1, to=1-1]
	\arrow[no head, from=2-2, to=1-2]
	\arrow[""{name=0, anchor=center, inner sep=0}, curve={height=30pt}, dotted, no head, from=2-2, to=1-2]
	\arrow[""{name=1, anchor=center, inner sep=0}, curve={height=-30pt}, dotted, no head, from=2-6, to=1-6]
	\arrow[no head, from=2-6, to=1-7]
	\arrow[no head, from=2-7, to=1-6]
	\arrow[""{name=2, anchor=center, inner sep=0}, curve={height=30pt}, dotted, no head, from=2-7, to=1-7]
	\arrow[""{name=3, anchor=center, inner sep=0}, curve={height=-30pt}, dotted, no head, from=2-11, to=1-11]
	\arrow[no head, from=2-11, to=2-12]
	\arrow[curve={height=30pt}, dotted, no head, from=2-12, to=1-12]
	\arrow["{+}"{description}, draw=none, from=1-4, to=2-4]
	\arrow["{+}"{description}, draw=none, from=1-9, to=2-9]
\end{tikzcd}\] 
So for each $i$,  the element $m_i \sigma_i(\pl) = \sigma_i(n_i \pl) \in \cL_{1} \oplus \cL_{2}$ and so $w_{n+1} = a + \sum_i{c_i m_i \sigma_i(\pl)} \in \cL_{1} \oplus \cL_{2}$ contradicting the fact that $w_{n+1} \notin \cL_{1} \oplus \cL_{2}$.
\end{proof}
\begin{remark}\label{rmk:hmsv}
The above result is somewhat complementary to a key trick used to study the ideal of relations of the GIT quotient $(\mathbb{P}^1)^n // \SL_2$ in \cite{hmsv12moduli}. Lemma~6.3 in loc.~cit.~is about decomposing elements corresponding to $n$-cycles into smaller ones using the Pl\"ucker relations over $\bZ[\frac{1}{2}]$ whereas we show above that in char $2$, larger cycles cannot be obtained from smaller ones using the Pl{\"u}cker relation. Over fields of char $\ne 2$, the element $w_n$ is even in the $\GL$-stable ideal generated by just $w_{3}$. 
\end{remark}
\section{Proofs of Theorems~\ref{thm:glstable},~\ref{thm:main}, and~\ref{thm:glspec}}\label{s:proofs}
\begin{proof}[Proof of Theorem~\ref{thm:glstable}]
We proceed by induction on $n$; the base case of $n =  2$ is well known as the ideal of Pl\"ucker relations is $\GL$-stable. Let $E_{2,1} \in \GL$ be the matrix with $E_{2,1}(e_1) = e_1 + e_2$ and $E_{2,1}(e_i) = e_i$ for all $i \ne 1$. The group $\GL$ is generated by the element $E_{2,1}$, the permutation matrices, and the diagonal matrices. It suffices to show that for each generator $g$ of $\GL$ and each generator $m$ of $I_n$, we have $g(m) \in I_n$. This is clearly true if $g$ is a permutation matrix or a diagonal matrix. We now treat the case where $g = E_{2,1}$.

Using the induction hypothesis, we may assume that $m$ corresponds to the $n$-cycle on $i_1, i_2, \ldots, i_n$. If $1 \notin \{i_1,i_2, \ldots, i_n\}$, then $E_{2,1}(m) = m$. So assume $1 \in \{i_1, i_2, \ldots, i_n\}$ and without loss of generality, let $i_1 = 1$. Writing $m = m' x_{i_n,1}x_{1, i_2}$, we have $E_{2,1}(m) = m'(x_{i_n,1}x_{1,i_2}+x_{i_n,2}x_{2,i_2} + x_{i_n,2}x_{1,i_2}+x_{i_n,1}x_{1,i_2})$ which we represent graphically below [note that $2$ could coincide with one of the $i_j$].
\[
\begin{tikzcd}[cramped, row sep=tiny]
	& {i_j} & {i_{n-1}} &&& {i_j} & {i_{n-1}} &&& {i_j} & {i_{n-1}} &&& {i_j} & {i_{n-1}} \\
	{i_3} && 2 & {i_n} & {i_3} && 2 & {i_n} & {i_3} && 2 & {i_n} & {i_3} && 2 & {i_n} \\
	& {i_2} & 1 &&& {i_2} & 1 &&& {i_2} & 1 &&& {i_2} & 1
	\arrow[dotted, no head, from=1-2, to=1-3]
	\arrow[no head, from=1-3, to=2-4]
	\arrow[dotted, no head, from=1-6, to=1-7]
	\arrow[no head, from=1-7, to=2-8]
	\arrow[dotted, no head, from=1-10, to=1-11]
	\arrow[no head, from=1-11, to=2-12]
	\arrow[dotted, no head, from=1-14, to=1-15]
	\arrow[no head, from=1-15, to=2-16]
	\arrow[dotted, no head, from=2-1, to=1-2]
	\arrow[no head, from=2-4, to=3-3]
	\arrow[dotted, no head, from=2-5, to=1-6]
	\arrow["{+}"{description}, draw=none, from=2-5, to=2-4]
	\arrow[no head, from=2-7, to=3-6]
	\arrow[no head, from=2-8, to=2-7]
	\arrow[dotted, no head, from=2-9, to=1-10]
	\arrow["{+}"{description}, draw=none, from=2-9, to=2-8]
	\arrow[no head, from=2-12, to=2-11]
	\arrow["{+}"{description}, draw=none, from=2-12, to=2-13]
	\arrow[dotted, no head, from=2-13, to=1-14]
	\arrow[no head, from=2-13, to=3-14]
	\arrow[no head, from=2-15, to=3-14]
	\arrow[no head, from=2-16, to=3-15]
	\arrow[no head, from=3-2, to=2-1]
	\arrow[no head, from=3-3, to=3-2]
	\arrow[no head, from=3-6, to=2-5]
	\arrow[no head, from=3-10, to=2-9]
	\arrow[no head, from=3-11, to=3-10]
\end{tikzcd}\]

Consider the degenerate case where $i_j = 2$ for some $j \leq n$. If $2< j < n-1$, then each of the individual terms in $E_{2,1}(m)$ are in $I_n$ since they all contain an $r$-cycle with $r \leq n$. If $j = 2$ (i.e., $i_2=2$), then the second and fourth terms are zero, and the first term is an $n$-cycle and the third term contains an $n-1$-cycle. The case $j=n$ (i.e., $i_n = 2$) is similar. So $E_{2,1}(m) \in I_{n}$ if $i_j = 2$ for some $j$.

If $i_j \ne 2$ for all $j$, then all four terms are nonzero. Clearly the first and second term are in $I_n$ being permutations of $w_n$. Using the Pl\"ucker relation $\pl$, we rewrite the last two terms as
\[\begin{tikzcd}[cramped, row sep=tiny]
	& {i_j} & {i_{n-1}} &&& {i_j} & {i_{n-1}} &&& {i_j} & {i_{j-1}} \\
	{i_3} && 2 & {i_n} & {i_3} && 2 & {i_n} & {i_3} && 2 & {i_n} \\
	& {i_2} & 1 &&& {i_2} & 1 &&& {i_2} & 1
	\arrow[dotted, no head, from=1-2, to=1-3]
	\arrow[no head, from=1-3, to=2-4]
	\arrow[dotted, no head, from=1-6, to=1-7]
	\arrow[no head, from=1-7, to=2-8]
	\arrow[dotted, no head, from=1-10, to=1-11]
	\arrow[no head, from=1-11, to=2-12]
	\arrow[dotted, no head, from=2-1, to=1-2]
	\arrow[no head, from=2-4, to=2-3]
	\arrow["{+}"{description}, draw=none, from=2-4, to=2-5]
	\arrow[dotted, no head, from=2-5, to=1-6]
	\arrow[no head, from=2-5, to=3-6]
	\arrow[no head, from=2-7, to=3-6]
	\arrow["{=}"{description}, draw=none, from=2-8, to=2-9]
	\arrow[no head, from=2-8, to=3-7]
	\arrow[dotted, no head, from=2-9, to=1-10]
	\arrow[dashed, no head, from=2-12, to=3-10]
	\arrow[no head, from=3-2, to=2-1]
	\arrow[no head, from=3-3, to=3-2]
	\arrow[no head, from=3-10, to=2-9]
	\arrow[no head, from=3-11, to=2-11]
\end{tikzcd}\]
which also lies in $I_{n}$ being a multiple of a permutation of $w_{n-1}$, as required.
\end{proof}
\begin{proof}[Proof of Theorem~\ref{thm:main}]
The chain of ideals $I_2 \subset I_3 \subset I_4 \ldots \subset I_n \subset \ldots$ is $\GL$-stable by Theorem~\ref{thm:glstable} and does not stabilize since $w_{n+1} \in I_{n+1}\setminus I_n$ by Theorem~\ref{thm:dkk}. Therefore, $R$ is not $\GL$-noetherian, and since $S \twoheadrightarrow R$, neither is $S$.
\end{proof}
\begin{remark}[Odd characteristics]\label{rmk:otherchar}
Our example does not generalize to odd characteristics (see Remark~\ref{rmk:hmsv}) but examples from \cite{dkk13sym} may lead to infinite ascending chains in $\Sym(\lw^p(k^{\infty}))$ when char$(k) = p >2$. We provide a word of caution however. Draisma--Krasilnikov--Krone's \cite{dkkinc} construction of the chain $\{I_n\}_{n\in \bN}$ was inspired by Vaughan-Lee's \cite{vl75lie} example of a \textit{variety} of abelian-by-nilpotent Lie algebras in char $2$ which is not {finitely based} (i.e., not defined by finitely many equations); such varieties \textit{are} finitely based when char $\ne 2$ \cite{bv72sol}.
\end{remark} 
We finally prove Theorem~\ref{thm:glspec} after some preparatory results. Given $f_1, f_2 \in R$, we write $f_1 \leadsto_I f_2$ if $f_2$ is in the $\GL$-stable ideal generated by $f_1$ in $R/I$.

\begin{lemma}\label{lem:trivalent}
    Let $m = m' x_{1,4}x_{2,4}x_{3,4}$ with $m'$ a monomial in $R$ such that $x_{4,i}\nmid m'$ for all $i$. Then $m \leadsto_0 n \coloneqq m'(x_{1,5}x_{2,4}x_{3,4} + x_{1,4}x_{2,5}x_{3,4}+ x_{1,4}x_{2,4}x_{3,5})$.
\end{lemma}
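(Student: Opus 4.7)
The plan is to apply the shear $E_{5,4}\in\GL$ (which sends $e_4\mapsto e_4+e_5$ and fixes every other $e_i$) to $m$ and read off $n$ as a multigraded component of the resulting element. Since $m'$ has no factor of the form $x_{4,i}$ by hypothesis, $E_{5,4}$ fixes every factor of $m'$ and sends $x_{j,4}\mapsto x_{j,4}+x_{j,5}$ for $j=1,2,3$; expanding the product yields
\[
E_{5,4}(m) \;=\; m'\prod_{j=1}^{3}(x_{j,4}+x_{j,5}) \;=\; m \;+\; n \;+\; n'' \;+\; m'\,x_{1,5}x_{2,5}x_{3,5},
\]
where $n''$ is the analogue of $n$ with the roles of the indices $4$ and $5$ interchanged.

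The four summands on the right have pairwise distinct $\bN^\infty$-multidegrees, namely $\deg(m')+\epsilon_1+\epsilon_2+\epsilon_3+(3-i)\epsilon_4+i\epsilon_5$ for $i=0,1,2,3$. Because the $\GL$-stable ideal generated by $m$ is preserved by the diagonal torus inside $\GL$, it decomposes as a direct sum of multigraded pieces; hence each of the four summands lies in that ideal individually, and in particular $m\leadsto_0 n$ as desired.

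The main obstacle is justifying the multigrading step when the diagonal torus has too few elements---most notably over $k=\mathbb{F}_2$, where it is trivial. In that setting one either passes to the algebraic closure and descends, or supplements the argument with an explicit combinatorial identity: applying the shears $E_{i,4}$ to the ``star'' elements $q_i:=E_{5,i}(m)-m$ (which lie in the $\GL$-stable ideal generated by $m$, for each $i=1,2,3$) and summing symmetrically over $i$ produces, after $\mathrm{char}\,2$ cancellations coming from the $\mathfrak{S}_3$-symmetry, an element that differs from $n$ only by easily-identified permutation-images of $m$.
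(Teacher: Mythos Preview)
Your argument is correct and is essentially the paper's, just phrased at the group level rather than the Lie-algebra level: the paper applies the derivation $e_{5,4}$ (the infinitesimal generator of your shear $E_{5,4}$), and since $x_{4,i}\nmid m'$ gives $e_{5,4}(m')=0$, Leibniz yields $e_{5,4}(m)=n$ in a single step---this derivation is precisely the operator that extracts your $i=1$ weight component from $E_{5,4}(m)$, so no separate multigrading argument is needed.

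Your concern about $k=\mathbb{F}_2$ is unnecessary in this context. For \emph{polynomial} $\GL$-representations, a $\GL$-stable subspace is a sub-polynomial-functor (equivalently, it is stable under the distribution algebra/hyperalgebra of $\GL$), and hence is automatically $\bN^\infty$-graded and stable under derivations such as $e_{5,4}$, regardless of how small $k^\times$ is; this is exactly what the paper uses implicitly when it invokes a Lie-algebra element. Your proposed combinatorial workaround in the last paragraph is therefore not needed---and as written it is imprecise, since $m'$ may well involve the indices $1,2,3$, so the elements $q_i=E_{5,i}(m)-m$ need not have the simple form you seem to assume.
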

\begin{proof}
Applying the Lie algebra element $e_{4,5}$ to $m$, we get $e_{4,5}(m) = e_{4,5}(m') x_{1,4}x_{2,4}x_{3,4} + m' e_{4,5}(x_{1,4}x_{2,4}x_{3,4})$. Since $x_{4,i} \nmid m'$, we have $e_{4,5}(m') = m'$ and using Leibniz rule, we get $e_{4,5}(x_{1,4}x_{2,4}x_{3,4}) = x_{1,5}x_{2,4}x_{3,4} + x_{1,4}x_{2,5}x_{3,4} + x_{1,4}x_{2,4}x_{3,5}$. So $m \leadsto_0 e_{4,5}(m) = m+n$ or, $m \leadsto_0 n$.
\end{proof}
The content of Lemma~\ref{lem:trivalent} is that if the graph $G_m$ of a monomial $m$ contains a trivalent vertex $l$, the $\GL$-ideal it generates contains the trinomial obtained by replacing each edge $(a, l)$ with $(a, N)$ one by one:
\[\begin{tikzcd}[cramped, sep=tiny]
	& k &&& k &&&& k &&&& k \\
	j & l & \leadsto_0 & j & l & N && j & l & N && j & l & N \\
	& i &&& i &&&& i &&&& i
	\arrow[no head, from=1-5, to=2-6]
	\arrow[no head, from=1-9, to=2-9]
	\arrow[no head, from=2-1, to=2-2]
	\arrow[no head, from=2-2, to=1-2]
	\arrow[no head, from=2-2, to=3-2]
	\arrow[no head, from=2-4, to=2-5]
	\arrow[no head, from=2-5, to=3-5]
	\arrow["{+}"{description}, draw=none, from=2-6, to=2-8]
	\arrow[no head, from=2-8, to=2-9]
	\arrow["{+}"{description}, draw=none, from=2-10, to=2-12]
	\arrow[shift right=1, curve={height=-10pt}, dashed, no head, from=2-12, to=2-14]
	\arrow[no head, from=2-13, to=1-13]
	\arrow[no head, from=2-13, to=3-13]
	\arrow[no head, from=3-9, to=2-10]
    \end{tikzcd}\]

\begin{proposition}\label{prop:mIn+1}
    For all $n \geq 2$, we have $\fm I_{n+1} \subset I_n$.
\end{proposition}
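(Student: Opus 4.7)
The plan is to exploit the $\GL$-stability of everything in sight: $\mathfrak{m}I_{n+1}$ and $I_n$ (by Theorem~\ref{thm:glstable}) are both $\GL$-stable, so it suffices to show that a $\GL$-equivariant generating set of $\mathfrak{m}I_{n+1}$ lies in $I_n$. The products $x_{a,b}\cdot\pl_{i_1,i_2,i_3,i_4}$ are in $I_2\subset I_n$ and the products $x_{a,b}\cdot w_r$ with $r\leq n$ are in $I_n$ trivially, so the entire content of the proposition reduces to showing $m:=x_{a,b}\cdot w_{n+1}\in I_n$. I would case-split based on how the edge $\{a,b\}$ sits relative to the $(n+1)$-cycle on $v_1,\ldots,v_{n+1}$: if $\{a,b\}$ is a cycle edge then $m=0$; if $\{a,b\}$ is a chord (only possible when $n\geq 3$), then $G_m$ contains a sub-cycle of length at most $\lfloor(n+3)/2\rfloor\leq n$, so $m\in I_n$ by direct factorization.

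For the pendant case (say $b=v_1$ and $a$ fresh), the key is a single Pl\"ucker manipulation: multiplying $\pl_{a,v_1,v_2,v_3}$ by $m/(x_{a,v_1}x_{v_2,v_3})$, one companion term vanishes due to a repeated factor $x_{v_1,v_2}^2$, and the other contains the $(n-1)$-cycle on $v_1,v_3,\ldots,v_{n+1}$, placing $m$ in $I_2+I_{n-1}\subset I_n$. (When $n=2$ both companion terms vanish outright, so $m\in I_2$ directly.)

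The disjoint case, where both $a$ and $b$ are fresh, is the main obstacle, and my plan is to reduce it to the pendant case using the Lie algebra action. Since $M_B:=x_{a,v_1}w_{n+1}$ lies in $I_n$ by the pendant case and $I_n$ is $\GL$-stable, so does $e_{v_1,b}(M_B)$; a Leibniz-rule computation yields $e_{v_1,b}(M_B)=m+T_A+T_B$, where $T_A,T_B$ are the two path monomials from $a$ to $b$ obtained by traversing the original cycle in the two orientations. It then remains to show $T_A+T_B\in I_n$, which I would do by multiplying $\pl_{v_1,v_2,v_{n+1},b}$ by the common factor $C:=x_{a,v_1}x_{v_2,v_3}x_{v_3,v_4}\cdots x_{v_n,v_{n+1}}$ of $T_A$ and $T_B$: this produces $T_A+T_B+T_C$ in $I_2$, where the third companion monomial $T_C$ contains the $n$-cycle on $v_2,\ldots,v_{n+1}$ when $n\geq 3$ and vanishes by a repeated edge when $n=2$. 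Combining the two reductions then gives $m\in I_n$.
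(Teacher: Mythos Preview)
Your argument is correct, with one harmless slip: in the pendant case, the companion term you describe carries the cycle on $v_1,v_3,v_4,\ldots,v_{n+1}$, which has $n$ vertices, so it is an $n$-cycle (not an $(n{-}1)$-cycle); thus $m$ lands in $I_2+I_n=I_n$ rather than $I_2+I_{n-1}$. The conclusion is unaffected. It is also worth making explicit once that the derivation $e_{v_1,b}$ preserves $I_n$: since $I_n$ is $\bN^\infty$-graded and $\GL$-stable, the multigraded components of $E_{v_1,b}(M_B)\in I_n$ lie in $I_n$, and the component of $b$-degree~$1$ is exactly $e_{v_1,b}(M_B)$.

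Your route is genuinely different from the paper's. The paper reduces immediately to the single disjoint-edge element $w_{n+1}x_{n+2,n+3}$ by citing an external generation statement, and then works \emph{forward} from an element of $I_n$ (an $n$-cycle with a length-two tail): one Pl\"ucker rewrite, then Lemma~\ref{lem:trivalent} applied at the resulting trivalent vertex, then a second Pl\"ucker combination, arriving at a permutation of $w_{n+1}x_{n+2,n+3}$ modulo $I_n$. You instead do the elementary edge/chord cases by hand, dispatch the pendant case with a single Pl\"ucker relation, and then bootstrap the disjoint case from the pendant case via one Lie-algebra move and one further Pl\"ucker relation; in particular you never use Lemma~\ref{lem:trivalent} or the cited generation result. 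Your approach is more self-contained and slightly more economical; the paper's approach has the advantage of isolating a single memorable combinatorial move (tailed $n$-cycle $\leadsto_{I_n}$ $(n{+}1)$-cycle with a disjoint edge) that one could imagine reusing.
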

The key combinatorial idea in the proof is that (modulo $I_n$) we can turn an $n$-cycle appended with a ``tail" of length two to an $n+1$-cycle with a disjoint edge, using the $\GL$-action and the Pl\"ucker relations. The monomial corresponding to the first graph is in $I_n$ and the monomial corresponding to the second graph \textit{generates} $\fm I_{n+1}$ so $\fm I_{n+1} \subset I_n$. 
\begin{proof}[Proof of Proposition~\ref{prop:mIn+1}]
We skip the $n=2$ case as it can be computed explicitly and instead focus on the $n > 2$ case. It suffices to prove $w_{n+1} x_{n+2,n+3} \in I_n$ since the $\GL$-ideal it generates is $\fm I_{n+1}$ by \cite[Corollary~2.17]{ganglp}.
   
Let $\sigma$ be a permutation with $1 \mapsto 3$ and $i \to i+3$ for $1 < i \leq n+1$. Consider the element $x_{2,4}x_{3,4}\sigma(w_n)$; it's graph is the $n$-cycle on $3, 5, 6, \ldots, n+3$ with the ``tail" $3-4-2$. We proceed to show $x_{2,4}x_{3,4}\sigma(w_n) \leadsto_{I_n} w_{n+1} x_{n+2,n+3}$. We first use $\pl_{2,3,4,5}$ to rewrite this monomial into a binomial, the second term of which clearly vanishes as $x_{3,4}^2 = 0$ in $R$.
\[\begin{tikzcd}[cramped, row sep = small]
& {n+3} &&& {n+3} && {n+3} \\
5 & 3 &\leadsto_{I_2}& 5 & 3 & 5 & 3 \\
4 & 2 && 4 & 2 & 4 & 3 
\arrow[no head, from=1-7, to=2-7]
\arrow[curve={height=-12pt}, dotted, no head, from=2-1, to=1-2]
\arrow[no head, from=2-1, to=2-2]
\arrow[no head, from=2-2, to=1-2]
\arrow[no head, from=2-2, to=3-1]
\arrow[curve={height=-12pt}, dotted, no head, from=2-4, to=1-5]
\arrow[no head, from=2-4, to=3-4]
\arrow[no head, from=2-5, to=1-5]
\arrow["{+}"{description}, draw=none, from=2-5, to=2-6]
\arrow[no head, from=2-5, to=3-5]
\arrow[curve={height=-12pt}, dotted, no head, from=2-6, to=1-7]
\arrow[no head, dashed, from=2-6, to=3-7]
\arrow[no head, from=3-1, to=3-2]
\arrow[no head, from=3-4, to=2-5]
\arrow[Rightarrow, no head, from=3-6, to=2-7]
\end{tikzcd}\] 
Next, we apply Lemma~\ref{lem:trivalent} to the trivalent vertex $3$ in the graph of the first term to obtain a trinomial.
\[\begin{tikzcd}[cramped, row sep=small]
	& {n+3} &&& {n+3} &&& {n+3} &&& {n+3} &&& {n+3} \\
	5 & 3 & {\leadsto_{I_2}} & 5 & 3 & {\leadsto_0} & 5 & 3 & 1 & 5 & 3 & 1 & 5 & 3 & 1 \\
	4 & 2 && 4 & 2 && 4 & 2 && 4 & 2 && 4 & 2 \\
	\arrow[no head, from=1-2, to=2-2]
	\arrow[no head, from=1-8, to=2-9]
	\arrow[no head, from=1-14, to=2-14]
	\arrow[curve={height=-12pt}, dotted, no head, from=2-1, to=1-2]
	\arrow[no head, from=2-2, to=2-1]
	\arrow[no head, from=2-2, to=3-1]
	\arrow[curve={height=-12pt}, dotted, no head, from=2-4, to=1-5]
	\arrow[no head, from=2-4, to=3-4]
	\arrow[no head, from=2-5, to=1-5]
	\arrow[no head, from=2-5, to=3-4]
	\arrow[no head, from=2-5, to=3-5]
	\arrow[curve={height=-12pt}, dotted, no head, from=2-7, to=1-8]
	\arrow[no head, from=2-7, to=3-7]
	\arrow[no head, from=2-8, to=3-8]
	\arrow["{{+}}"{description}, draw=none, from=2-9, to=2-10]
	\arrow[curve={height=-12pt}, dotted, no head, from=2-10, to=1-11]
	\arrow[no head, from=2-11, to=1-11]
	\arrow[no head, from=2-11, to=3-11]
	\arrow["{{+}}"{description}, draw=none, from=2-12, to=2-13]
	\arrow[curve={height=-12pt}, dotted, no head, from=2-13, to=1-14]
	\arrow[no head, from=2-13, to=3-13]
	\arrow[no head, from=3-1, to=3-2]
	\arrow[no head, from=3-7, to=2-8]
	\arrow[no head, from=3-10, to=2-10]
	\arrow[dashed, no head, from=3-10, to=2-12]
	\arrow[no head, from=3-13, to=2-14]
	\arrow[no head, from=3-14, to=2-15]
\end{tikzcd}\]
The first two terms in the resulting trinomial can be combined using $\pl_{1,3,4,n+3}$ into a monomial, which lies in $I_n$ as it's graph contains the $n$-cycle $4,5,6,\ldots, n+3$.
\[\begin{tikzcd}[cramped, row sep=small]
	& {n+3} &&& {n+3} &&& {n+3} \\
	5 & 3 & {\leadsto_{I_2}} & 5 & 3 & 1 & 5 & 3 & 1 \\
	4 & 2 && 4 & 2 && 4 & 2
	\arrow[curve={height=-12pt}, dotted, no head, from=2-1, to=1-2]
	\arrow[no head, from=2-1, to=2-2]
	\arrow[no head, from=2-2, to=1-2]
	\arrow[no head, from=2-2, to=3-1]
	\arrow[no head, from=3-1, to=3-2]
	\arrow[curve={height=-12pt}, dotted, no head, from=2-4, to=1-5]
	\arrow[no head, from=2-4, to=3-4]
	\arrow[no head, from=2-5, to=2-6]
	\arrow[no head, from=2-5, to=3-5]
	\arrow["{+}"{description}, draw=none, from=2-6, to=2-7]
	\arrow[curve={height=-12pt}, dotted, no head, from=2-7, to=1-8]
	\arrow[no head, from=2-7, to=3-7]
	\arrow[no head, from=2-8, to=1-8]
	\arrow[no head, from=3-4, to=1-5]
	\arrow[no head, from=3-7, to=2-8]
	\arrow[no head, from=3-8, to=2-9]
\end{tikzcd}\]
So $x_{2,4}x_{3,4} \sigma(w_n) \leadsto_{I_n} x_{1,2} (x_{3,4}x_{4,5} \ldots x_{n+2,n+3}x_{3,n+3})$ which is a permutation of $w_{n+1} x_{n+2,n+3}$. Since $w_n \in I_n$, we have $w_{n+1} x_{n+2, n+3} \in I_n$ as required.
\end{proof}
\begin{proof}[Proof of Theorem~\ref{thm:glspec}]
For $n \geq 2$, we have the inclusion $I_n \subset I_{n+1}$ by definition, and by Proposition~\ref{prop:mIn+1}, we also have $I_{n+1}^2 \subset I_n$ so they define the same closed subset in the Zariski topology on $\Spec_{\GL}(R)$.
\end{proof}
\begin{remark}
   Consider the $\GL$-equivariant map $\phi \colon R \to \lw[x_1, x_2,x_3 \ldots, y_1, y_2,y_3\ldots]$ where $x_{i, j} \mapsto x_iy_j + x_j y_i$. The analogous map in the polynomial ring case is well-studied: it's kernel is generated by the Pl\"ucker relations. In this case, we easily see that $I_2 \subsetneq \ker(\phi) = I_{\infty} \coloneqq \cup I_n$. Furthermore, since $(0)$ is a $\GL$-prime in the target of $\phi$, we get that $I_{\infty}$ is a $\GL$-prime ideal in $R$.
\end{remark}
 
\bibliographystyle{abbrv}
\bibliography{bibliography}
\end{document}